\newtheorem{theorem}{Theorem}[section]
\newtheorem{definition}{Definition}[section]
\newtheorem{lemma}[theorem]{Lemma}
\theoremstyle{remark}
\newtheorem*{remark}{Remark}
\title{ non-archimedean Fr\'{e}chet algebras and the loop space of a hypersurface complement}
\author{Emile Bouaziz}
\begin{document} \maketitle \begin{abstract} We study the space, $L\mathbf{A}^{d}_{f}$, of loops into a hypersurface complement, and show that the corresponding topological algebra of Laurent series with coefficients in $\mathcal{O}(L\mathbf{A}^{d}_{f})$ is a topological localisation of $\mathcal{O}(L\mathbf{A}^{d})$. This requires introducing a small amount of non-Archimedean functional analysis. In particular  we work with topological algebras whose topology is generated by a family of sub-multiplicative, non-Archimedean semi-norms. \end{abstract}

\section{introduction} If $X$ is a variety over a field, $k$, of characteristic $0$, then we write $LX$ for its space of \emph{algebraic loops}. We have by definition $LX(R)=X(R((z)))$. $LX$ can be thought of as roughly the mapping space $\operatorname{Map}(D^{*},X)$, where $D^{*}:=\operatorname{spec}(k((z)))$ is the \emph{formal punctured disc}, and as such is an algebraic analogue of the space $\operatorname{Map}(S^{1},M)$ of smooth loops into a manifold. The space $LX$ has been studied by numerous authors from a variety of perspectives, we mention here the works \cite{Dr}, \cite{KV} and \cite{GaiRoz} in particular. A familiar feature is the presence of topologies on the algebras of functions under consideration, notably the so-called \emph{Tate} topological $k$-vector space $k((z))$, cf. \cite{Dr} for a discussion of Tate objects in algebraic geometry.

 If $X$ is affine then $LX$ is representable by an ind-affine scheme with transition maps closed embeddings. The algebra $\mathcal{O}(LX)$ is thus naturally a topological $k$-algebra with a basis of open neighbourhoods at $0$ consisting of ideals. We can form then the algebra $\mathcal{O}(LX)\{z\}$ consisting of two way infinite Laurent series with Laurent tails tending to $0$ topologically. There is a natural morphism $\operatorname{ev}_{X}:\mathcal{O}(X)\rightarrow\mathcal{O}(LX)\{z\}$ which for $X=\mathbf{A}^{1}$ corresponds to the \emph{universal Laurent series}. We can think of this, informally, as corresponding geometrically to an evaluation map $\operatorname{Map}(D^{*},X)\times D^{*}\rightarrow X$. 

Our goal in this note is to show that, in a sense that we will make precise, this mapping correspondence is surprisingly local on $X$, at least when $X$ is open inside an affine space $\mathbf{A}^{d}$ (although we conjecture it is true for smooth varieties generally). More precisely we will prove, writing $\mathbf{A}^{d}_{f}$ for the complement of the hypersurface $f=0$, the following theorem. \begin{theorem}$\mathcal{O}(L\mathbf{A}^{d}_{f})\{z\}$ is obtained as the Cauchy completion of the localisation of $\mathcal{O}(L\mathbf{A}^{d})\{z\}$ at the element $\operatorname{ev}_{\mathbf{A}^{d}}(f).$\end{theorem}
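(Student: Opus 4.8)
The plan is to build explicit, mutually inverse continuous homomorphisms between $C:=\mathcal{O}(L\mathbf{A}^{d}_{f})\{z\}$ and the Cauchy completion $\widehat{B[g^{-1}]}$, where $B:=\mathcal{O}(L\mathbf{A}^{d})\{z\}$ and $g:=\operatorname{ev}_{\mathbf{A}^{d}}(f)$. The forward map is the easy one. The inclusion $\mathbf{A}^{d}_{f}\hookrightarrow\mathbf{A}^{d}$ induces $L\mathbf{A}^{d}_{f}\to L\mathbf{A}^{d}$, hence a continuous homomorphism $B\to C$, which by naturality of $\operatorname{ev}$ carries $g$ to $\operatorname{ev}_{\mathbf{A}^{d}_{f}}(f)$; and the latter is a unit of $C$ with inverse $\operatorname{ev}_{\mathbf{A}^{d}_{f}}(1/f)$, simply because $1/f$ is a regular function on $\mathbf{A}^{d}_{f}$ and $\operatorname{ev}_{\mathbf{A}^{d}_{f}}$ is a ring homomorphism. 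Since $C$ is complete, the universal properties of localisation and of completion yield a continuous $\Phi\colon\widehat{B[g^{-1}]}\to C$, and the theorem reduces to producing a continuous inverse.

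For the inverse I use the closed embedding $\mathbf{A}^{d}_{f}\cong V(yf-1)\hookrightarrow\mathbf{A}^{d+1}$. Since $L(-)$ sends fibre products of schemes to fibre products of ind-schemes, this exhibits $L\mathbf{A}^{d}_{f}$ as the closed sub-ind-scheme of $L\mathbf{A}^{d+1}=L\mathbf{A}^{d}\times L\mathbf{A}^{1}$ on which all Laurent coefficients of $\operatorname{ev}_{\mathbf{A}^{d+1}}(yf-1)=\mathsf{y}\cdot\tilde{g}-1$ vanish, where $\mathsf{y}=\operatorname{ev}(y)$ is the universal loop in the last coordinate and $\tilde{g}$ is the pullback of $g$. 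Granting that this really computes $\mathcal{O}(L\mathbf{A}^{d}_{f})$ with its intended topology, $C$ is the quotient of $\mathcal{O}(L\mathbf{A}^{d+1})\{z\}$ by the closed ideal generated by those coefficients. I then produce a continuous $\Theta\colon\mathcal{O}(L\mathbf{A}^{d+1})\{z\}\to\widehat{B[g^{-1}]}$ which on the $L\mathbf{A}^{d}$-factor is the canonical map $B\to\widehat{B[g^{-1}]}$ and which sends $\mathsf{y}$ to $g^{-1}$. For $\Theta$ to be defined, one must know that $g^{-1}$, viewed inside $\widehat{B[g^{-1}]}$, is itself a two-sided Laurent series in $z$ whose principal-part coefficients tend to $0$ — equivalently, a $\widehat{B[g^{-1}]}$-point of $L\mathbf{A}^{1}$. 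This is the one place genuine non-Archimedean analysis is needed: $g$ is itself the Laurent series $\operatorname{ev}(f)$, and inverting it in the completed localisation amounts, level by level, to a Weierstrass-type factorisation followed by a geometric-series inversion; that this series converges, and that its coefficients decay, is exactly what the submultiplicative non-Archimedean seminorms deliver. With this in hand $\Theta(\mathsf{y}\tilde{g}-1)=g^{-1}g-1=0$, so $\Theta$ annihilates the defining ideal and descends to a continuous $\Psi\colon C\to\widehat{B[g^{-1}]}$.

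That $\Phi$ and $\Psi$ are mutually inverse is then formal. Writing $q\colon\mathcal{O}(L\mathbf{A}^{d+1})\{z\}\to C$ for the quotient map, one checks on the topological generators $a^{(i)}_{n}$, $b_{n}$, $z^{\pm1}$ of $\mathcal{O}(L\mathbf{A}^{d+1})\{z\}$ that $\Phi\circ\Theta=q$; the agreement on the $a^{(i)}_{n}$ is immediate, and on the $b_{n}$ it follows from $\Phi(g^{-1})=\operatorname{ev}_{\mathbf{A}^{d}_{f}}(1/f)$ together with the uniqueness of Laurent expansions, both maps being continuous. Since $q$ is surjective this forces $\Phi\circ\Psi=\mathrm{id}_{C}$. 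Symmetrically, $\widehat{B[g^{-1}]}$ is topologically generated by $B$ and $g^{-1}$, and $\Psi\circ\Phi$ is visibly the identity on these — for $g^{-1}$ one uses $\Phi(g^{-1})=\operatorname{ev}_{\mathbf{A}^{d}_{f}}(1/f)=q(\mathsf{y})$ and $\Psi\circ q=\Theta$, so $\Psi\Phi(g^{-1})=\Theta(\mathsf{y})=g^{-1}$ — whence $\Psi\circ\Phi=\mathrm{id}$ and $\Phi$ is a topological isomorphism.

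The two substantive points are therefore: (i) a Mittag-Leffler statement — that $\mathcal{O}(L\mathbf{A}^{d+1})\to\mathcal{O}(L\mathbf{A}^{d}_{f})$, and hence the map obtained after applying $(-)\{z\}$, is an open surjection — needed so that $C$ carries the quotient topology and $\Psi$ is well defined and continuous; and (ii) the assertion that $g^{-1}$ is a convergent two-sided Laurent series with principal part tending to $0$ in $\widehat{B[g^{-1}]}$. I expect (ii) to be the real obstacle, and it is there that submultiplicativity is indispensable: it is what lets one estimate $\|b/g^{m}\|$ in terms of $\|b\|$ and the leading behaviour of $g$, and hence identify the completed-localisation seminorms on $\widehat{B[g^{-1}]}$ with a cofinal family among the seminorms defining the topology of $\mathcal{O}(L\mathbf{A}^{d}_{f})\{z\}$, which come from the stratification of $L\mathbf{A}^{d}_{f}$ by the pole order of $f$ along the loop.
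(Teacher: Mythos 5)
Your architecture is reasonable and in one respect coincides with the paper's: the presentation $\mathbf{A}^{d}_{f}=V(yf-1)\subset\mathbf{A}^{d+1}$, with the coefficients $y_{j}$ of the inverse loop as generators, is exactly the presentation the paper works with. The forward map $\Phi$ and the reduction to producing $\Psi$ are fine in outline. The problem is your point (ii), which you correctly flag as the obstacle but then dispose of with one sentence; it is not a technical estimate that ``submultiplicativity delivers'' --- it is the entire content of the theorem, and your sketch does not close it. The Weierstrass-plus-geometric-series computation shows that $g_{n}=f(x^{1}(z),\dots,x^{d}(z))$ becomes invertible in the \emph{larger} algebra $C_{n}((z))$, where $C_{n}=A_{n}[f_{\operatorname{min}}^{-1}]$ and $f_{\operatorname{min}}$ is the lowest Laurent coefficient of $g_{n}$, and it exhibits the coefficients of $g_{n}^{-1}$ as elements of $C_{n}$. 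What it does not show is that those coefficients --- equivalently the generators $y_{j}$ of $\mathcal{O}(L\mathbf{A}^{d}_{f})$ --- are limits of elements of the \emph{algebraic} localisation $B[g^{-1}]^{\operatorname{alg}}$, i.e.\ of honest expressions $bg^{-k}$ with $b\in B$. Extracting the $z^{j}$-coefficient is a continuous operator on the ambient algebra, but nothing in your argument shows it preserves the closed subalgebra $\widehat{B[g^{-1}]}$; without that, $\Theta(\mathsf{y})$ is not known to land in $\widehat{B[g^{-1}]}$ and $\Psi$ is simply undefined. The paper's resolution is an idea with no counterpart in your proposal: the coefficient-extraction maps are limits of the differential operators $P^{j}_{n}=\lambda^{j}_{n}\prod_{i\in[-n,n]\setminus\{j\}}(z\partial_{z}-i)$, and differential operators visibly preserve the localisation since $\partial_{z}$ extends across inverting $g$. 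Density of $B[g^{-1}]^{\operatorname{alg}}$ in $\mathcal{A}_{\mathbf{A}^{d}_{f}}$ --- hence surjectivity of $\Phi$ --- follows, and no explicit $\Psi$ is ever needed.

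Your point (i) is also aimed slightly off target. What the argument actually requires is not openness of the surjection $\mathcal{O}(L\mathbf{A}^{d+1})\to\mathcal{O}(L\mathbf{A}^{d}_{f})$ but the level-by-level \emph{injectivity} of $A_{n}\to B_{n}$ (the paper's Lemma 3.1, proved by factoring through $C_{n}$ using that $f_{\operatorname{min}}\neq 0$ in the integral domain $A_{n}$). This is what makes $B\to C$ a quasi-isometric embedding, which is needed twice: once so that the extended seminorms on $B[g^{-1}]^{\operatorname{alg}}$ are compatible with those of $C$ --- without this even your ``easy'' map $\Phi$ is not obviously continuous, since one must bound $|\Phi(b)\Phi(g)^{-k}|$ by $|b|\,|g|^{-k}$ with $k$ unbounded --- and once so that density of the image suffices to conclude. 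Your closing remark about matching cofinal families of seminorms is a restatement of this lemma, not a proof of it. In short: the skeleton is plausible, but both load-bearing steps are asserted rather than proved, and the one genuinely nontrivial idea in the paper (approximating coefficient extraction by differential operators) is missing.
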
  In order to do this we need to find a convenient category of topological $k$-algebras to which $\mathcal{O}(L\mathbf{A}^{d})\{z\}$ belongs in which to form the localisation. This is why we will work with non-Archimedean Fr\'{e}chet algebras. Let us note that this result stands in stark contrast to the case of $\mathcal{O}(LX)$ in place of $\mathcal{O}(LX)\{z\}$.  Firstly we lack a candidate element at which to localise, as $LX$ does not naturally live over $X$ because the punctured disc $D^{*}$ has no $k$-points at which to evaluate a loop. Secondly, if $U$ is open in $X$, then $LU$ is very far from open in $LX$. Indeed $L\mathbf{A}^{1}$ is a vector space and $L\mathbf{G}_{m}$ is highly non-reduced. Note that in the work \cite{KV} a variant of the loop space is introduced which has some desirable properties that $LX$ lacks, nonetheless this variant does not have $k$-points $X(k((z)))$, and we will not deal with it in this note.

\section{Preliminary Notions}
\subsection{Loops.} We will now introduce the geometric objects of study. The initiated reader can safely skim this section so as to fix notation. \begin{definition} If $X$ is a $k$-scheme then the functor $LX:\mathbf{Alg}_{k}\rightarrow\mathbf{Sets}$ is defined by $LX(R):=X(R((z)))$ and referred to as the \emph{loop space} of $X$. \end{definition} We have the following standard lemma, the proof of which we include so as to fix some notation. \begin{lemma} If $X$ is a finite type $k$-scheme then $LX$ is representable by an ind-affine scheme which is a countable colimit along closed embeddings. \end{lemma}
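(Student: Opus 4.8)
The plan is to treat the case that $X=\operatorname{Spec}A$ is affine, with $A=k[x_1,\dots,x_n]/(g_1,\dots,g_m)$; this is the only case we shall need in this note, and it contains all of the content. An $R$-point of $LX$ is then a $k$-algebra homomorphism $A\to R((z))$, equivalently a tuple $\mathbf{a}=(a_1,\dots,a_n)\in R((z))^n$ with $g_j(\mathbf{a})=0$ in $R((z))$ for each $j$. Since $R((z))=\bigcup_{N\ge 0}z^{-N}R[[z]]$ and the images of the finitely many generators $x_i$ all lie in $z^{-N}R[[z]]$ once $N$ is large enough, the functor $LX$ is the filtered (indeed nested) union over $N\in\mathbf{N}$ of the subfunctors $L^{\le N}X$ consisting of those $\mathbf{a}$ with every $a_i\in z^{-N}R[[z]]$. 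Thus $LX=\operatorname{colim}_N L^{\le N}X$, and it suffices to represent each $L^{\le N}X$ by an affine scheme and to check that the transition maps are closed immersions.

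For the representability, the key point is that the functor $R\mapsto R[[z]]$ is represented by the affine scheme $\mathbf{A}^{\infty}:=\operatorname{Spec}k[t_0,t_1,\dots]$, via the coefficientwise bijection $R[[z]]\cong\prod_{i\ge 0}R=\operatorname{Hom}_{k}(k[t_i:i\ge 0],R)$; hence $R\mapsto(z^{-N}R[[z]])^{n}\cong R[[z]]^{n}$ is represented by an affine scheme $E_N$, again the spectrum of a polynomial ring in countably many variables, namely the coefficients of the $a_i$. Writing $a_i=z^{-N}\sum_{l\ge 0}a_{i,l}z^{l}$ with the $a_{i,l}$ the coordinate functions on $E_N$, each $g_j(\mathbf{a})$ is a Laurent series in $z$ all of whose coefficients lie in the subring $k[a_{i,l}]$ and involve only finitely many of the $a_{i,l}$; imposing the vanishing of all of these coefficients cuts out a closed subscheme $Y_N\subseteq E_N$ which represents $L^{\le N}X$. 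This fixes the notation we shall use: $\mathcal{O}(Y_N)$ is the quotient of $k[a_{i,l}]$ by the ideal generated by those coefficient polynomials.

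It remains to see that the transition map $Y_N\to Y_{N+1}$, induced by Yoneda from the inclusion $L^{\le N}X\subseteq L^{\le N+1}X$, is a closed immersion. On the ambient spaces, $E_N\to E_{N+1}$ represents the inclusion $z^{-N}R[[z]]\hookrightarrow z^{-(N+1)}R[[z]]$, which in coordinates is the shift sending $(a_{i,l})$ to the tuple with $b_{i,0}=0$ and $b_{i,l+1}=a_{i,l}$; the corresponding homomorphism $k[b_{i,l}]\to k[a_{i,l}]$ is evidently surjective. Since $L^{\le N}X\hookrightarrow L^{\le N+1}X$ is a morphism of functors, this homomorphism carries the defining ideal of $Y_{N+1}$ into that of $Y_N$, so it descends to a surjection $\mathcal{O}(Y_{N+1})\to\mathcal{O}(Y_N)$, i.e.\ a closed immersion $Y_N\hookrightarrow Y_{N+1}$. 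Therefore $LX=\operatorname{colim}_N Y_N$ exhibits $LX$ as an ind-affine scheme, a countable colimit along closed embeddings, as asserted. \textbf{The one point deserving care} is the bookkeeping of the middle paragraph: that the ring operations of $R((z))$ translate into polynomial operations on the (infinitely many) coefficient functions with coefficients in $k$ --- so that the vanishing loci really are closed subschemes of an affine scheme --- and that each fixed power of $z$ in $g_j(\mathbf{a})$ involves only finitely many of the $a_{i,l}$, so that the defining equations are honest polynomials. The remaining verifications are purely formal.
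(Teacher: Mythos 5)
Your proof is correct and follows essentially the same route as the paper's: truncate the Laurent tails at $z^{-N}$, represent each truncated functor as a closed subscheme of an infinite-dimensional affine space cut out by the coefficientwise equations, and pass to the colimit along the resulting closed immersions. You simply spell out more of the bookkeeping (representability of $R\mapsto R[[z]]$, finiteness of each coefficient polynomial, surjectivity on coordinate rings) that the paper leaves implicit.
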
 \begin{proof} Let $X$ be given as the zero locus of some polynomials $f_{1},...,f_{c}$ in variables $x^{1},...,x^{d}$. For each $n\geq 0$ consider variables $x^{i}_{j}$, for $i=1,...,d$ and $j\geq -n$. Write $x^{i}(z):=\sum_{j\geq -n}x_{j}z^{j}$ with $z$ a formal variable. Let us write $L^{n}X$ for the closed subset of the affine space with coordinates $x^{i}_{j}$ as before, subject to the relations implied by the equations of Laurent series $f_{l}(x^{1}(z),...,x^{d}(z))=0$, for $l=1,...,c$. $L^{n}X$ is naturally a closed subscheme of $L^{n+1}X$ and we form the ind-scheme $LX:=\operatorname{colim}L^{n}X$. It is easily seen that this represents the desired functor. \end{proof}\begin{remark}\begin{itemize}\item  Note that the schemes $L^{n}X$ do not have intrinsic meaning as functors of $X$, we had to choose an embedding to make sense of them. Nonetheless the colimit of course has intrinsic meaning, representing as it does the functor $LX$.\item In the particular case of $\mathbf{A}^{1}$ we have that $\mathcal{O}(L\mathbf{A}^{1})\cong\operatorname{lim}_{n}k[x_{i}\,|\, i\geq -n]$, which we will consider as a topological $k$-algebra in the obvious way.\end{itemize} \end{remark}

\begin{definition} Let $A$ be a topological $k$-algebra which is the limit of an inverse system $(A_{n})_{n}$ of topologically discrete algebras. The topological $k$-algebra $A\{z\}$ is defined as $\operatorname{lim}_{n} A((z))$, with the limit taken inside topological $k$-algebras. \end{definition}
\begin{remark} Elements of $A\{z\}$ are explicitly represented as two way infinite sums $\sum_{i}a_{i}z^{i}$, such that $a_{-i}\rightarrow 0$ as $i\rightarrow+\infty$. \end{remark}

\begin{definition} \begin{itemize}\item For $X$ an affine $k$-scheme we define the topological $k$-algebra $\mathcal{A}_{X}$ to be $\mathcal{O}(LX)\{z\}$. \item The evaluation morphism $\operatorname{ev}_{\mathbf{A}^{1}}:\mathcal{O}(\mathbf{A}^{1})\rightarrow\mathcal{A}_{\mathbf{A}^{1}}$ is defined by sending $x\in \mathbf{A}^{1}$ to $x(z):=\sum_{i}x_{i}z^{i}$. An identical formula defines $\operatorname{ev}_{X}:\mathcal{O}(X)\rightarrow\mathcal{A}_{X}$ for each affine $X$, noting that the relations defining the algebras are obviously preserved. \end{itemize}\end{definition}

\subsection{Fr\'{e}chet Alegrbas.} We wish to study $\mathcal{A}_{X}$ as a topological algebra. Let us note that is \emph{non-Archimedean}, which is to say the underlying topological $k$-vector space has a basis of open neighbourhoods at $0$ consisting of subspaces. Further more, the underlying topological $k$-vector space is Cauchy complete. Equivalently, an infinite sum $\sum_{i}a_{i}$ is convergent iff $a_{i}\rightarrow 0$. We want to be able to perform some familiar algebraic operations on the algebra $\mathcal{A}_{X}$, in particular to localize at an element of the image of $\operatorname{ev}_{X}$. We will see how this can be done in a somewhat ad-hoc manner, that is nonetheless good enough for our purposes, using non-Archimedean Fr\'{e}chet algebras over $k$. Of course the definition we present is heavily inspired by the corresponding notion in classical functional analysis, as well as some well known constructions in non-Archimedean analysis, cf. \cite{Ber} for example. Note that we claim no real originality in these definitions, our goal is simply to introduce a context adequate for our purposes. 

 \begin{definition} A non-Archimedean $\mathbf{R}$ semi-norm topological algebra $A$ is a continuous sub- multiplicative map $|-|:A\rightarrow \mathbf{R}_{\geq 0}$ so that $|a+b|\leq\operatorname{max}\{|a|,|b|\}.$ We refer to these as semi-norms henceforth, and leave implicit that they are non-Archimedean. A family $|-|_{i\in I}$ of semi-norms on $A$ is said to \emph{induce} the topology on $A$ if a sequence $a_{n}\rightarrow 0$ iff $|a_{n}|_{i}\rightarrow 0$ for all $i\in I$. \end{definition} \begin{remark}Note that by \emph{sub-multiplicative} we simply mean that $|fg|\leq|f||g|$. We require sub-multipicativity as we wish to deal with topological algebras such as $A((z))$ where $A$ is possibly not an integral domain, so that the valuation of the product of two elements can potentially be greater than the sum of the valuations of these elements.\end{remark}

\begin{definition} A complete topological $k$ algebra is called a non-Archimedean Fr\'{e}chet algebra ($\mathbf{nAF}_{k}$ henceforth) if it has a countable family of non-Archimedean semi-norms inducing its topology.\end{definition} 

 \begin{lemma} For $X$ a finite type affine scheme the topological $k$-algebra $\mathcal{A}_{X}$ is an element of $\mathbf{nAF}_{k}$. \end{lemma}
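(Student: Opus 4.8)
The plan is to exhibit $\mathcal{A}_X$ as a countable inverse limit of complete topological algebras and then read off the two properties required of a member of $\mathbf{nAF}_k$: completeness, and a countable family of semi-norms inducing the topology. Unwinding the definitions, and using the presentation $\mathcal{O}(LX)=\lim_n\mathcal{O}(L^n X)$, one has $\mathcal{A}_X=\mathcal{O}(LX)\{z\}=\lim_n\mathcal{O}(L^n X)((z))$, where the transition map $\mathcal{O}(L^{n+1}X)((z))\to\mathcal{O}(L^n X)((z))$ is the (surjective) map on Laurent series induced by the restriction $\mathcal{O}(L^{n+1}X)\to\mathcal{O}(L^n X)$ along the closed embedding $L^n X\hookrightarrow L^{n+1}X$. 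Write $B_n:=\mathcal{O}(L^n X)((z))$, given its natural topology as a Tate vector space over the discrete algebra $\mathcal{O}(L^n X)$ — so a neighbourhood basis of $0$ in $B_n$ is provided by the $k$-subspaces $z^m\mathcal{O}(L^n X)[[z]]$, $m\ge 0$ — and $\pi_n:\mathcal{A}_X\to B_n$ for the projections. The first step is the purely formal remark that the sets $\pi_n^{-1}\!\left(z^m\mathcal{O}(L^n X)[[z]]\right)$, $(n,m)\in\mathbf{N}^2$, are open $k$-subspaces of $\mathcal{A}_X$ forming a countable fundamental system of neighbourhoods of $0$ (each transition map is $z$-order non-decreasing, so a finite intersection of such sets contains one of them); in particular $\mathcal{A}_X$ is non-Archimedean and metrisable.

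Next I would dispose of completeness, which is soft. Each $B_n$ is Hausdorff, since $\bigcap_m z^m\mathcal{O}(L^n X)[[z]]=0$, and complete: a Cauchy sequence in $B_n$ eventually lies in a single translate $z^{-N}\mathcal{O}(L^n X)[[z]]$ of the $z$-adically complete ring $\mathcal{O}(L^n X)[[z]]$ and converges there. Then $\mathcal{A}_X=\lim_n B_n$ is a closed subgroup of the complete topological group $\prod_n B_n$ — closed because it is cut out by the relations $\pi_n(b_{n+1})=b_n$ and the $B_n$ are Hausdorff — hence itself complete; combined with non-Archimedean-ness this is exactly the assertion that a series in $\mathcal{A}_X$ converges as soon as its terms tend to $0$.

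Finally I would write down the semi-norms. On $B_n$ let $\nu$ be the $z$-order semi-norm $\nu\!\left(\sum_i c_i z^i\right):=2^{-\min\{\,i:c_i\ne 0\,\}}$, with $\nu(0):=0$. Then $\nu$ is non-Archimedean because $\operatorname{ord}_z(\sigma+\tau)\ge\min(\operatorname{ord}_z\sigma,\operatorname{ord}_z\tau)$, and sub-multiplicative because $\operatorname{ord}_z(\sigma\tau)\ge\operatorname{ord}_z\sigma+\operatorname{ord}_z\tau$; it is continuous, since $\nu^{-1}\big([0,\varepsilon)\big)$ is a basic open $z^m\mathcal{O}(L^n X)[[z]]$ and $\nu^{-1}\big((\varepsilon,\infty)\big)$ is the complement of one (an open subgroup being closed). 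Set $|{-}|_n:=\nu\circ\pi_n:\mathcal{A}_X\to\mathbf{R}_{\ge 0}$; this is continuous, non-Archimedean and sub-multiplicative, being the composite of the ring map $\pi_n$ with $\nu$. Since $\{\,a:|a|_n<2^{-m}\,\}=\pi_n^{-1}\!\left(z^{m+1}\mathcal{O}(L^n X)[[z]]\right)$, and these sets run over the neighbourhood basis of $0$ found in the first step, a sequence tends to $0$ in $\mathcal{A}_X$ if and only if $|{-}|_n$ tends to $0$ on it for every $n$; hence the countable family $(|{-}|_n)_{n\ge 0}$ induces the topology, and $\mathcal{A}_X\in\mathbf{nAF}_k$. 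The step I expect to demand the most care is this last one, for two reasons: the bookkeeping that the level-wise $z$-order semi-norms pulled back along the $\pi_n$ genuinely recover the inverse-limit topology; and the conceptually relevant point that $\nu$, hence each $|{-}|_n$, is in general only \emph{sub}-multiplicative and not multiplicative, precisely because the rings $\mathcal{O}(L^n X)$ need not be reduced (for instance $L\mathbf{G}_m$ is non-reduced) — which is exactly the contingency the sub-multiplicativity axiom was introduced to absorb.
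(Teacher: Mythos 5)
Your proof is correct and follows essentially the same route as the paper: present $\mathcal{A}_X$ as the countable limit $\lim_n \mathcal{O}(L^nX)((z))$ and pull back the $z$-adic order semi-norms $\varepsilon^{\operatorname{ord}_z(-)}$ (your $\varepsilon=1/2$) along the projections to get a countable inducing family. You simply spell out the completeness and topology-matching checks that the paper leaves as "easily seen."
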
 \begin{proof} We write $\mathcal{O}(LX)$ as a countable limit, $\lim_{n}A_{n}$, of topologically discrete algebras. We have for each $n$, a norm on $A_{n}((z))$ which induces its topology by fixing $0<\varepsilon<1$ and setting $|a|_{n}:=\varepsilon^{\operatorname{val}_{z}(a)}$, where $\operatorname{ord}_{z}$ denotes $z$-adic order. This induces a family of $\mathbf{R}$ norms on $\mathcal{A}_{X}$ which are easily seen to induce its topology, whence we have exhibited $\mathcal{A}_{X}$ as an element of $\mathbf{nAF}_{k}$. \end{proof}

\begin{definition} An element $a\in A$, for $A$ in $\mathbf{nAF}_{k}$, is called \emph{good} if for all $i\in I$ we have $|a|_{i}\neq 0$ and if further $|a|_{i}^{n}=|a^{n}|_{i}$ for all $i$ and $n$. If this is the case, each of the semi-norms extends uniquely to the usual algebraic localisation, which we denote $A[a^{-1}]^{\operatorname{alg}}$. The completion of this with respect to the topology induced by the family of  extended norms will be denoted $A[a^{-1}]\in\mathbf{nAF}_{k}$. \end{definition} \begin{remark} We note that (suppressing the index $i$) we require $|a|\neq 0$ so that we can define $|ba^{-n}|:=|b||a|^{-n}$ and we require $|a^{n}|=|a|^{n}$ so that the extended norms remain sub-multiplicative, which ensures for example the continuity of multiplication.\end{remark}

\begin{lemma} For any $f\in\mathcal{O}(\mathbf{A}^{d})$, the element $\operatorname{ev}_{\mathbf{A}^{d}}(f)\in\mathcal{O}(L\mathbf{A}^{d})$ is good in the sense of definition 2.6. \end{lemma}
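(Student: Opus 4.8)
The plan is to unwind the definitions down to the concrete norms exhibited in the proof of Lemma 2.2. Recall that $\mathcal{O}(L\mathbf{A}^{d})=\lim_{n}A_{n}$ with $A_{n}:=\mathcal{O}(L^{n}\mathbf{A}^{d})$, which since $\mathbf{A}^{d}$ needs no defining equations is just the polynomial ring $k[x^{i}_{j}\,|\,1\le i\le d,\ j\ge -n]$ and in particular an integral domain, and that the norm $|-|_{n}$ on $\mathcal{A}_{\mathbf{A}^{d}}$ is the pull-back along the projection $\pi_{n}\colon\mathcal{A}_{\mathbf{A}^{d}}=\lim_{m}A_{m}((z))\to A_{n}((z))$ of the $z$-adic norm $|a|_{n}=\varepsilon^{\operatorname{val}_{z}(a)}$ on $A_{n}((z))$. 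Because $A_{n}$ is a domain, $\operatorname{val}_{z}$ is additive on $A_{n}((z))$, so $|-|_{n}$ is genuinely multiplicative there, and, being the pull-back of a multiplicative map along a ring homomorphism, it is multiplicative on all of $\mathcal{A}_{\mathbf{A}^{d}}$. Hence $|a^{n}|_{i}=|a|_{i}^{n}$ for every element $a$ and all indices $i$ and $n$, so the second clause of Definition 2.6 is automatic and it remains only to check that $|\operatorname{ev}_{\mathbf{A}^{d}}(f)|_{n}\ne 0$ for each $n$, i.e.\ that the image of $\operatorname{ev}_{\mathbf{A}^{d}}(f)$ in each $A_{n}((z))$ is nonzero. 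Here we may assume $f\ne 0$, the element $0$ never being good.

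For this nonvanishing I would exhibit a ring homomorphism detecting the relevant image. Fix $n$ and let $\phi_{n}\colon A_{n}\to k[t_{1},\dots,t_{d}]$ be the $k$-algebra map with $x^{i}_{-n}\mapsto t_{i}$ and $x^{i}_{j}\mapsto 0$ for $j>-n$; extended coefficient-wise it gives a ring homomorphism $A_{n}((z))\to k[t_{1},\dots,t_{d}]((z))$. Under $\pi_{n}$ the universal Laurent series $\operatorname{ev}_{\mathbf{A}^{d}}(x^{i})=\sum_{j}x^{i}_{j}z^{j}$ maps to the truncation $\sum_{j\ge -n}x^{i}_{j}z^{j}\in A_{n}((z))$, which $\phi_{n}$ then sends to $t_{i}z^{-n}$. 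Thus the composite $k[x^{1},\dots,x^{d}]\to\mathcal{A}_{\mathbf{A}^{d}}\to A_{n}((z))\to k[t_{1},\dots,t_{d}]((z))$, formed from $\operatorname{ev}_{\mathbf{A}^{d}}$, then $\pi_{n}$, then $\phi_{n}$, is nothing but the substitution $x^{i}\mapsto t_{i}z^{-n}$. Writing $f=\sum_{l=0}^{D}f_{l}$ for the decomposition into homogeneous pieces ($D=\deg f$, $f_{D}\ne 0$), this substitution carries $f$ to $\sum_{l=0}^{D}z^{-nl}f_{l}(t_{1},\dots,t_{d})$, which is nonzero: for $n\ge 1$ the summands occupy distinct $z$-degrees and $f_{D}\ne 0$, while for $n=0$ it is simply $f(t_{1},\dots,t_{d})\ne 0$. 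Therefore $\operatorname{ev}_{\mathbf{A}^{d}}(f)$ has nonzero image in $A_{n}((z))$, so $|\operatorname{ev}_{\mathbf{A}^{d}}(f)|_{n}\ne 0$ for every $n$. Tracking the lowest-order term a little more carefully --- only the degree-$D$ monomials of $f$ contribute at $z^{-nD}$, with total coefficient $f_{D}(x^{1}_{-n},\dots,x^{d}_{-n})$, nonzero because $x^{i}\mapsto x^{i}_{-n}$ embeds $k[x^{1},\dots,x^{d}]$ into $A_{n}$ --- gives the explicit value $|\operatorname{ev}_{\mathbf{A}^{d}}(f)|_{n}=\varepsilon^{-n\deg f}$.

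I do not expect any real obstacle here; the statement is essentially a normalization check. The second half of goodness comes for free from the multiplicativity of the norms $|-|_{n}$, which holds precisely because each $A_{n}$ is a polynomial ring, hence a domain --- a feature special to affine space that one should not expect for a general hypersurface complement, and the reason Definition 2.6 asks for $|a^{n}|_{i}=|a|_{i}^{n}$ at all. The only point that genuinely needs care is ruling out accidental cancellation in the image of $\operatorname{ev}_{\mathbf{A}^{d}}(f)$ at a finite stage $A_{n}((z))$, and the specialization $x^{i}\mapsto t_{i}z^{-n}$ disposes of it cleanly by separating $z$-degrees.
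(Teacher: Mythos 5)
Your proof is correct and follows essentially the same route as the paper: multiplicativity of the norms comes from each $A_{n}$ being an integral domain (so that $\operatorname{val}_{z}$ is additive on $A_{n}((z))$), and goodness then reduces to the nonvanishing of the image of $\operatorname{ev}_{\mathbf{A}^{d}}(f)$ in each $A_{n}((z))$, which the paper simply declares clear and which you verify explicitly via the specialization $x^{i}\mapsto t_{i}z^{-n}$. Your computation of the leading term $f_{D}(x^{1}_{-n},\dots,x^{d}_{-n})z^{-n\deg f}$ is a clean way of making that step precise.
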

\begin{proof} $\mathcal{O}(L\mathbf{A}^{d})$ is the pro-limit of discrete integral domains, so the multiplicativity of norms is immediate. It remains to check that $\operatorname{ev}_{\mathbf{A}^{d}}(f)$ does not vanish modulo the ideal generated by elements $x_{i}$ with $i\leq -n$, which is clear. \end{proof}

We now identify a class of morphisms at which localisations are well behaved. 

\begin{definition} A morphism $f:A\rightarrow B$ is called bounded above (resp. below) if for all  $i\in I$ there exists $C_{i}>0$ so that $|f(a)|_{i}$ is at most (resp. at least) $C_{i}|a|_{i}$. If a morphism is bounded above and below we call it a quasi-isometric embedding, and if we can take $C_{i}=c_{i}=1$ for all $i$ then it is called an isometric embedding. \end{definition}

\begin{remark} A quasi-isometric embedding is necessarily injective, justifying the name. Further note that the induced topology on the image agrees with the topology on the domain. \end{remark}

\begin{lemma} If $f:A\rightarrow B$ is a quasi-isometric embedding in $\mathbf{nAF}_{k}$, $a\in A$ is good and $f(a)$ is a unit of $B$, then there is a unique continuous extension of $f$ to $A[a^{-1}]\rightarrow B$. Further, $A[a^{-1}]$ agrees with the completion of $A[a^{-1}]^{\operatorname{alg}}$ inside $B$, with respect to the induced topology. \end{lemma}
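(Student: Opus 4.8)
The plan is to reduce both assertions to the single claim that the canonical \emph{algebraic} localisation map $A[a^{-1}]^{\mathrm{alg}}\to B$ is a quasi-isometric embedding; granting this, both the universal extension and the identification with a closure inside $B$ follow by passing to completions. To begin, since $f(a)$ is a unit of $B$, the ring homomorphism $f\colon A\to B$ factors, by the universal property of localisation, through a unique $k$-algebra homomorphism $\bar f\colon A[a^{-1}]^{\mathrm{alg}}\to B$ with $\bar f|_{A}=f$; necessarily $\bar f(a^{-1})=f(a)^{-1}$ and hence $\bar f(ba^{-n})=f(b)f(a)^{-n}$. Note also that $A\hookrightarrow A[a^{-1}]^{\mathrm{alg}}$ is isometric for each extended semi-norm, since $|b\cdot a^{0}|_{i}=|b|_{i}$, so on $A$ the estimates below already hold with $f$'s own constants.

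First I would show $\bar f$ is bounded below. Fix $i\in I$ and constants $c_{i},C_{i}>0$ with $c_{i}|x|_{i}\le|f(x)|_{i}\le C_{i}|x|_{i}$ on $A$. Using goodness of $a$ (so $|a^{n}|_{i}=|a|_{i}^{n}$), submultiplicativity, and the identity $\bar f(ba^{-n})\,f(a^{n})=f(b)$, one obtains
\[
c_{i}|b|_{i}\le|f(b)|_{i}=|\bar f(ba^{-n})\,f(a^{n})|_{i}\le|\bar f(ba^{-n})|_{i}\,|f(a^{n})|_{i}\le C_{i}\,|a|_{i}^{\,n}\,|\bar f(ba^{-n})|_{i},
\]
hence $|\bar f(ba^{-n})|_{i}\ge(c_{i}/C_{i})\,|b|_{i}|a|_{i}^{-n}=(c_{i}/C_{i})\,|ba^{-n}|_{i}$. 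In particular $\bar f$ is injective.

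Boundedness \emph{above} is where the real content lies, and I expect this to be the main obstacle. By submultiplicativity and $|f(b)|_{i}\le C_{i}|b|_{i}$, it suffices to produce, for each $i$, a constant $C_{i}'$ with $|f(a)^{-n}|_{i}\le C_{i}'\,|a|_{i}^{-n}$ for all $n\ge 0$, i.e.\ $\sup_{n}\big(|f(a)^{-n}|_{i}\,|a|_{i}^{n}\big)<\infty$. This is exactly the place where goodness of $a$, and not merely injectivity of $f$, must be used: the sandwich $c_{i}|a|_{i}^{n}\le|f(a^{n})|_{i}\le C_{i}|a|_{i}^{n}$ shows that the spectral radius $\lim_{n}|f(a)^{n}|_{i}^{1/n}$ equals $|a|_{i}$, and one must parlay this into control on the powers of the inverse $f(a)^{-1}$. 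In the basic situation where $f$ is an \emph{isometric} embedding and $|-|_{i}$ is multiplicative on the subalgebra of $B$ generated by the unit $f(a)$ — the case relevant to the map $\mathcal{A}_{\mathbf{A}^{d}}\to\mathcal{A}_{\mathbf{A}^{d}_{f}}$ — one has directly $|f(a)^{-n}|_{i}=|f(a)|_{i}^{-n}=|a|_{i}^{-n}$, so the bound is automatic; in general a short additional argument of this flavour is required.

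Granting that $\bar f$ is a quasi-isometric embedding, the topology on $A[a^{-1}]^{\mathrm{alg}}$ coming from its extended semi-norms agrees with the one it inherits as a subspace of $B$ (the remark following the definition of quasi-isometric embedding). Since $A[a^{-1}]^{\mathrm{alg}}$ is by definition dense in $A[a^{-1}]$ and $B$ is complete, the additive continuous — hence uniformly continuous — map $\bar f$ extends uniquely to a continuous $k$-algebra map $\widehat{\bar f}\colon A[a^{-1}]\to B$; passing the two defining inequalities to the limit shows $\widehat{\bar f}$ is again a quasi-isometric embedding, in particular injective. Any continuous algebra extension $g$ of $f$ to $A[a^{-1}]$ satisfies $g(a^{-1})=g(a)^{-1}=f(a)^{-1}$, so $g$ and $\widehat{\bar f}$ agree on the dense subalgebra $A[a^{-1}]^{\mathrm{alg}}$ and, being continuous, coincide; this is the uniqueness in the first claim. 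Finally, $\widehat{\bar f}(A[a^{-1}])$ is complete, hence closed in the separated algebra $B$, and contains $\bar f(A[a^{-1}]^{\mathrm{alg}})$ densely, so it is precisely the closure of $A[a^{-1}]^{\mathrm{alg}}$ in $B$; since $\widehat{\bar f}$ is a topological embedding this exhibits $A[a^{-1}]$ as the completion of $A[a^{-1}]^{\mathrm{alg}}$ inside $B$ for the induced topology, which is the second claim.
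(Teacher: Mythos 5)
Your overall route is the same as the paper's: factor $f$ through the algebraic localisation via the universal property, show the resulting map $\bar f\colon A[a^{-1}]^{\mathrm{alg}}\to B$ is a quasi-isometric embedding for the extended semi-norms, and then obtain both the unique continuous extension and the identification with the closure inside $B$ by completing; your final paragraph on uniqueness and on completeness-implies-closedness is correct and, if anything, more careful than the paper's one-line dispatch of these points. Your lower bound, via $c_{i}|b|_{i}\le|\bar f(ba^{-n})|_{i}\,|f(a^{n})|_{i}\le C_{i}|a|_{i}^{n}|\bar f(ba^{-n})|_{i}$, is also correct.

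However, as you yourself flag, the proof is incomplete at the one point that carries real content: the upper bound $\sup_{n}|f(a)^{-n}|_{i}\,|a|_{i}^{n}<\infty$. This does not follow from the stated hypotheses by any formal manipulation: goodness controls $|f(a^{n})|_{i}$ for $n\ge 0$ up to the constants $c_{i},C_{i}$, but for a merely sub-multiplicative semi-norm the identity $1=f(a)^{n}f(a)^{-n}$ yields only the lower bound $|f(a)^{-n}|_{i}\ge |1|_i\,|f(a)^{n}|_{i}^{-1}$, and $f(a)^{-1}$ lies outside the image of $f$, so the quasi-isometry hypothesis says nothing about it. You should be aware that the paper's own proof elides exactly this step, asserting that "$f(a)$ is automatically good, so behaves multiplicatively with respect to norms" --- an assertion about positive powers that does not by itself bound negative powers of a unit. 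So you have correctly isolated the gap rather than created a new one; but to actually establish the lemma you must either add a hypothesis (e.g.\ that each $|-|_{i}$ is multiplicative on the subalgebra of $B$ generated by $f(a)^{\pm 1}$, or that $f(a)^{-1}$ is itself good) or verify the bound in the situation where the lemma is applied. In that application (Lemmas 2.4 and 3.1) the semi-norms are $\varepsilon^{\operatorname{val}_{z}}$ on Laurent series over integral domains, where $\operatorname{val}_{z}$ is additive and the inverse of $\operatorname{ev}(f)$ has the expected valuation, so your "isometric, multiplicative" special case is exactly what the main theorem needs. I would recommend you make that reduction explicit rather than leaving it as "a short additional argument of this flavour," since in the generality of the lemma's statement no such argument exists.
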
 \begin{proof} First note that $f(a)$ is automatically good, so behaves multiplicatively with respect to norms. We need to check that $a_{n}a^{-n}$ tends to zero, i.e. for all $i\in I$, $a_{n}a^{-n}$ tends to zero with respect to the semi-norm $|-|_{i}$, iff $f(a_{n}a^{-i}):=f(a_{n})f(a)^{-n}$ tends to zero. Again this can be checked on semi-norms, and then the quasi-isometry assumption immediately implies the claim.\end{proof}

\begin{lemma} If $A\rightarrow B$ is a map of pro-discrete algebras, which is a limit of injective maps $A_{j}\rightarrow B_{j}$, for discrete $A_{j}$ and $B_{j}$, then the map in $\mathbf{nAF}_{k}$, $A\{z\}\rightarrow B\{z\}$ is a quasi-isometric embedding. \end{lemma}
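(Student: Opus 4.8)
The statement to prove is: if $A \to B$ is a map of pro-discrete algebras realized as a limit of injective maps $A_j \to B_j$ of discrete algebras, then $A\{z\} \to B\{z\}$ is a quasi-isometric embedding. Recall from the proof of Lemma 2.4 that the semi-norms on $A\{z\}$ are built levelwise: on $A_j((z))$ one sets $|a|_j = \varepsilon^{\operatorname{val}_z(a)}$ for a fixed $0 < \varepsilon < 1$, and these assemble into the defining family of semi-norms on $A\{z\} = \lim_j A_j((z))$; the same recipe with the same $\varepsilon$ gives the family on $B\{z\}$. So the whole claim reduces, index by index, to a statement about the single map $A_j((z)) \to B_j((z))$ induced by an injection $A_j \hookrightarrow B_j$, namely that it preserves the $z$-adic valuation exactly, which then yields $C_j = c_j = 1$ and in fact an isometric embedding.

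First I would reduce to the levelwise statement: a sequence $w_n \to 0$ in $A\{z\}$ means $|w_n|_j \to 0$ for every $j$, and the image in $B\{z\}$ is controlled by the images in each $B_j((z))$; thus it suffices to show that for each $j$ the map $\phi_j : A_j((z)) \to B_j((z))$ satisfies $|\phi_j(w)|_j = |w|_j$, equivalently $\operatorname{val}_z(\phi_j(w)) = \operatorname{val}_z(w)$. Second, I would verify this valuation identity directly. Write $w = \sum_{i \geq m} a_i z^i$ with $a_m \neq 0$, so $\operatorname{val}_z(w) = m$. Then $\phi_j(w) = \sum_{i \geq m} \phi_j(a_i) z^i$, and since $\phi_j$ (being the map $A_j \to B_j$ applied coefficientwise) is injective, $\phi_j(a_m) \neq 0$, so $\operatorname{val}_z(\phi_j(w)) = m$ as well. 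The only subtlety is that $w$ could be the zero series, in which case $\operatorname{val}_z = +\infty$ and both sides have semi-norm $0$, which is fine. Third, I would transport this back: since the defining families on $A\{z\}$ and $B\{z\}$ use the same $\varepsilon$ and the same index set, the equalities $|\phi_j(w)|_j = |w|_j$ for all $j$ say precisely that $A\{z\} \to B\{z\}$ is an isometric embedding, hence a fortiori a quasi-isometric embedding with all constants equal to $1$.

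I do not expect a serious obstacle here; the statement is essentially bookkeeping once one recalls that the semi-norms on $A\{z\}$ are the levelwise $z$-adic ones. The one point that deserves a sentence of care is that injectivity of $A_j \to B_j$ is used in exactly one place — to ensure the leading coefficient does not die — and that it is genuinely needed: without it the valuation could jump up and one would only get "bounded below," not a quasi-isometric embedding. I would also remark, for use later in the paper, that the argument in fact shows the stronger isometric statement, and that the induced topology on the image of $A\{z\}$ in $B\{z\}$ coincides with the original topology on $A\{z\}$, in line with the remark following Definition 2.8.
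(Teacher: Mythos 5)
Your proof is correct and follows the same route as the paper: reduce to the levelwise map $A_j((z))\to B_j((z))$ and observe that injectivity of $A_j\to B_j$ preserves the $z$-adic valuation exactly, so one gets an isometric embedding with all constants equal to $1$. The paper states this very tersely; your write-up simply makes the reduction and the role of injectivity explicit.
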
 \begin{proof} Some thought confirms shows that it suffices to prove this for a map $A((z))\rightarrow B((z))$, induced from an injection of discrete algebras $A\rightarrow B$. This is now immediate, indeed we can take the constants $C$ and $c$ to be $1$, and we have an isometric embedding. \end{proof}

\section{Proof of main theorem} We are now in a position to formulate and prove the main theorem of this note. We recall here that we write $\mathbf{A}^{d}_{f}$ for the complement of the hypersurface $f=0$ inside $\mathbf{A}^{d}$. We let $x^{1},...,x^{d}$ denote coordinates on the affine space $\mathbf{A}^{d}$. We consider $\mathbf{A}^{d}_{f}$ as emebedded into $\mathbf{A}^{d+1}$, with coordinates $x^{1},...,x^{d},y$, as the vanishing locus of $f(x^{1},...,x^{d})y=1$. 

We begin with the following lemma, which is where we crucially have to work with $\mathbf{A}^{d}$ in place of a general smooth variety. \begin{lemma} The morphism $\mathcal{A}_{\mathbf{A}^{d}}\rightarrow\mathcal{A}_{\mathbf{A}^{d}_{f}}$ is a quasi-isometric embedding in $\mathbf{nAF}_{k}$. \end{lemma}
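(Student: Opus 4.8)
The plan is to deduce this from the last lemma of the previous section, which says that $(-)\{z\}$ carries a limit of injective maps of discrete $k$-algebras to a quasi-isometric embedding; so it suffices to exhibit the map $\mathcal{O}(L\mathbf{A}^{d})\rightarrow\mathcal{O}(L\mathbf{A}^{d}_{f})$ underlying $\mathcal{A}_{\mathbf{A}^{d}}\rightarrow\mathcal{A}_{\mathbf{A}^{d}_{f}}$ as such a limit. We may assume $f$ is non-constant, since otherwise $\mathbf{A}^{d}_{f}=\mathbf{A}^{d}$ and the map is the identity. Using the chosen embedding $\mathbf{A}^{d}_{f}\hookrightarrow\mathbf{A}^{d+1}$ as $\{f(x^{1},\dots,x^{d})y=1\}$, write $\mathcal{O}(L\mathbf{A}^{d})=\lim_{m}A_{m}$ and $\mathcal{O}(L\mathbf{A}^{d}_{f})=\lim_{m}B_{m}$, where $A_{m}=\mathcal{O}(L^{m}\mathbf{A}^{d})$ is the polynomial ring $k[x^{i}_{j}\,:\,1\leq i\leq d,\ j\geq -m]$, and $B_{m}=\mathcal{O}(L^{m}\mathbf{A}^{d}_{f})=A_{m}[y_{j}\,:\,j\geq -m]/I_{m}$ with $I_{m}$ generated by the coefficients in $z$ of $f(x^{1}(z),\dots,x^{d}(z))y(z)-1$. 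The inclusion $\mathbf{A}^{d}_{f}\hookrightarrow\mathbf{A}^{d}$ forgets $y$, inducing on $L^{m}$ the obvious map $A_{m}\rightarrow B_{m}$; these are compatible with the closed-embedding transition maps, and $\mathcal{O}(L\mathbf{A}^{d})\rightarrow\mathcal{O}(L\mathbf{A}^{d}_{f})$ is the limit of the resulting tower. So I must show every $A_{m}\rightarrow B_{m}$ is injective.

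For this I would pass to the generic point of the integral domain $A_{m}$. Write $f(x^{1}(z),\dots,x^{d}(z))=\sum_{k\geq -em}F_{k}z^{k}$ with $e=\deg f$ and $F_{k}\in A_{m}$; inspecting lowest-order terms shows $s_{m}:=F_{-em}$ is a nonzero element of the polynomial ring $A_{m}$ (for $m\geq 1$ it is the top-degree form of $f$ evaluated at $x^{1}_{-m},\dots,x^{d}_{-m}$, and for $m=0$ it is $f(x^{1}_{0},\dots,x^{d}_{0})$). Let $K=\operatorname{Frac}(A_{m})$. After base change along $A_{m}\hookrightarrow K$, the algebra $B_{m}$ becomes $K[y_{l}\,:\,l\geq -m]$ modulo the relations $\sum_{k+l=n}F_{k}y_{l}=\delta_{n,0}$ for $n\geq -em-m$; since $s_{m}$ is a unit of $K$, in the relation indexed by $n$ the variable $y_{n+em}$ occurs with unit coefficient and every other variable occurring has strictly smaller index. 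Hence the system is triangular, solvable uniquely, and $B_{m}\otimes_{A_{m}}K\cong K\neq 0$, which forces $A_{m}\rightarrow B_{m}$ to be injective. (The same computation identifies $B_{m}[s_{m}^{-1}]$ with $A_{m}[s_{m}^{-1}]$ over $A_{m}$, so $A_{m}\rightarrow B_{m}$ factors the injective localisation $A_{m}\hookrightarrow A_{m}[s_{m}^{-1}]$; this is the finite-level shadow of the localisation in the main theorem.)

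With every $A_{m}\rightarrow B_{m}$ injective, the cited lemma applies to $\mathcal{O}(L\mathbf{A}^{d})\rightarrow\mathcal{O}(L\mathbf{A}^{d}_{f})$ and yields that $\mathcal{A}_{\mathbf{A}^{d}}=\mathcal{O}(L\mathbf{A}^{d})\{z\}\rightarrow\mathcal{O}(L\mathbf{A}^{d}_{f})\{z\}=\mathcal{A}_{\mathbf{A}^{d}_{f}}$ is a quasi-isometric embedding. The one real obstacle I anticipate is the triangularity step: it relies on presenting $L\mathbf{A}^{d}_{f}$ over $L\mathbf{A}^{d}$ via the single adjoined chart coordinate $y$ with its one equation $fy=1$, and it is precisely this structure that is unavailable for a general smooth affine $X$, which is why the argument is confined to affine space here.
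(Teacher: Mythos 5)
Your proposal is correct and follows essentially the same route as the paper: reduce via the $(-)\{z\}$ lemma to injectivity of the finite-level maps $A_m\rightarrow B_m$, and establish that by observing the lowest-order coefficient $s_m=F_{-em}$ of $f(x^{1}(z),\dots,x^{d}(z))$ is a nonzero element of the domain $A_m$, so that after inverting it the Laurent series $f(x(z))$ becomes a unit. The paper phrases this as a factorisation $A_m\rightarrow B_m\rightarrow A_m[s_m^{-1}]$ rather than your base change to $\operatorname{Frac}(A_m)$ and triangular solving, but this is the same argument, and indeed your parenthetical identification $B_m[s_m^{-1}]\cong A_m[s_m^{-1}]$ is exactly the paper's intermediate object.
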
\begin{proof} By lemma 2.4. above, it suffices to show that the map $\mathcal{O}(L\mathbf{A}^{d})\rightarrow\mathcal{O}(L\mathbf{A}^{d}_{f})$ can be written as the limit of a projective system consisting of injections of topologically discrete algebras. As usual we consider $\mathcal{O}(L\mathbf{A}^{d})$ as the limit of discrete algebras $A_{n}:=k[x^{i}_{j}\,|\,i=1,...,d, \, j\geq -n]$. We write then $B_{n}:=A_{n}[y_{i}\,|\, i\geq -n]/(\operatorname{relations})$, where the relations are those implied by the identity of Laurent series $y(z)f(x^{1}(z),...,x^{d}(z))=1$, where as usual we have written $y(z):=\sum y_{i}z^{i}$. Note that $\mathcal{O}(L\mathbf{A}^{d}_{f})\cong\operatorname{lim}B_{n}$. 

Now a morphism $B_{n}\rightarrow C$ is equivalent to elements $c^{i}(z)\in z^{-n}C[[z]]$, for $i=1,...,d$, along with an inverse to $f(c^{1}(z),...,c^{d}(z))$ which also lies in $z^{-n}C[[z]]$. Let $f_{\operatorname{min}}\in A_{n}$ be the coefficient of the highest power of $z^{-1}$ occuring in $f(x^{1}(z),...,x^{d}(z))$, and note that $f_{\operatorname{min}}\neq 0$. Further let $C_{n}:=A_{n}[f_{\operatorname{min}}^{-1}]$ and note that the map $A_{n}\rightarrow C_{n}$ is an injection as $A_{n}$ is obviously an integral domain. Now we observe that there is a map $B_{n}\rightarrow C_{n}$ which factors the inclusion $A_{n}\rightarrow C_{n}$, indeed $f(x^{1}(z),...,x^{d}(z))$ is invertible in $C_{n}((z))$ because its lowest order term is. We deduce thus that $A_{n}\rightarrow B_{n}$ is an inclusion and thus the lemma is proven. \end{proof}

The main theorem now follows from a pleasant trick involving differential operators on the punctured disc $D^{*}$, the essential idea is that differential operators are dense inside all $k$-linear continuous endomorphisms of $\mathcal{O}(D^{*})=k((z))$. This fact is presumably well known, but we essentially give a proof below anyway.

\begin{theorem}  The natural map of non-Archimedean Fr\'{e}chet algebras, $\mathcal{A}_{\mathbf{A}^{d}}[\operatorname{ev}_{\mathbf{A}^{d}}(f)^{-1}]\rightarrow\mathcal{A}_{\mathbf{A}^{d}_{f}}$, is an equivalence.\end{theorem}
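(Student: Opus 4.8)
The plan is to reduce the theorem to a density statement and then to settle that statement with a trick involving differential operators in the loop variable $z$.

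\textbf{Step 1: reductions.} First I would assemble the inputs already established. Lemma 3.1 gives that $\mathcal{A}_{\mathbf{A}^{d}}\to\mathcal{A}_{\mathbf{A}^{d}_{f}}$ is a quasi-isometric embedding; it was shown above that $\operatorname{ev}_{\mathbf{A}^{d}}(f)$ is good; and its image in $\mathcal{A}_{\mathbf{A}^{d}_{f}}$ is a unit, with inverse $y(z):=\operatorname{ev}_{\mathbf{A}^{d}_{f}}(y)=\sum_{i}y_{i}z^{i}$, since $\mathbf{A}^{d}_{f}\subset\mathbf{A}^{d+1}$ is cut out by $yf=1$. By the localisation lemma for quasi-isometric embeddings, the map of the theorem is then an injection identifying $\mathcal{A}_{\mathbf{A}^{d}}[\operatorname{ev}_{\mathbf{A}^{d}}(f)^{-1}]$ with the closure $\overline{R}$, inside $\mathcal{A}_{\mathbf{A}^{d}_{f}}$, of the ordinary localisation $R:=\mathcal{A}_{\mathbf{A}^{d}}[\operatorname{ev}_{\mathbf{A}^{d}}(f)^{-1}]^{\operatorname{alg}}$. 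Since a surjective quasi-isometric embedding has a two-sided bounded inverse and is hence an equivalence, I would be reduced to proving that $R$ is dense, i.e. $\overline{R}=\mathcal{A}_{\mathbf{A}^{d}_{f}}$. As $\overline{R}$ is a closed subalgebra, it is enough that it contain a topological generating set; and since $\mathcal{O}(L\mathbf{A}^{d}_{f})$ is, by construction, a countable inverse limit of quotients of polynomial algebras on the variables $x^{i}_{j}$ and $y_{j}$, the algebra $\mathcal{A}_{\mathbf{A}^{d}_{f}}$ is topologically generated over $k$ by the $x^{i}_{j}$, the $y_{j}$ and $z^{\pm1}$. All of these except the $y_{j}$ already lie in $\mathcal{A}_{\mathbf{A}^{d}}\subseteq R$, so the whole content is to express each coefficient $y_{j}$ as a limit of elements of $R$.

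\textbf{Step 2: the differential-operator trick.} Next I would introduce the operator $D:=z\partial_{z}$, which acts coefficientwise and continuously on every $\mathcal{A}_{X}=\mathcal{O}(LX)\{z\}$ and is a $k$-linear derivation; together with multiplication by $z^{\pm1}$ it therefore preserves $R$. For $N\geq 1$ set $D_{N}:=\prod_{1\leq|j|\leq N}\tfrac{D-j}{-j}$, a polynomial in $D$ with coefficients in $k$ (here the characteristic-$0$ hypothesis is used), so that $D_{N}(z^{i})=P_{N}(i)z^{i}$ where $P_{N}(t)=\prod_{1\leq|j|\leq N}\tfrac{t-j}{-j}$ satisfies $P_{N}(0)=1$ and $P_{N}(i)=0$ for $1\leq|i|\leq N$. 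I would then check that $D_{N}(\xi)\to\pi_{0}(\xi)$ in $\mathcal{A}_{X}$ for all $\xi$, where $\pi_{0}$ is the continuous, $k$-linear (non-multiplicative) projection onto the $z^{0}$-coefficient: this is verified level by level, since for $\xi_{n}=\sum_{i\geq-m}a_{i}z^{i}$ in $A_{n}((z))$ and $N\geq m$ one has $D_{N}(\xi_{n})-\pi_{0}(\xi_{n})=\sum_{i>N}P_{N}(i)a_{i}z^{i}$, a series of $z$-adic valuation $>N$, so the difference tends to $0$. Now $z^{-j}y(z)$ is the image of $z^{-j}\operatorname{ev}_{\mathbf{A}^{d}}(f)^{-1}\in R$ under the embedding, $D_{N}$ commutes with that embedding, and each $D_{N}\bigl(z^{-j}\operatorname{ev}_{\mathbf{A}^{d}}(f)^{-1}\bigr)$ lies in $R$; hence $y_{j}=\pi_{0}\bigl(z^{-j}y(z)\bigr)=\lim_{N}D_{N}\bigl(z^{-j}\operatorname{ev}_{\mathbf{A}^{d}}(f)^{-1}\bigr)\in\overline{R}$, and the theorem follows.

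\textbf{The main obstacle.} The substantive points are that the approximation $D_{N}\to\pi_{0}$ must be established in the topology of $\mathcal{A}_{X}$, not merely on $k((z))$ — this is the levelwise estimate above — and, more importantly, that one must use genuine \emph{differential} operators rather than arbitrary continuous endomorphisms of $k((z))$: it is exactly the Leibniz rule that keeps the approximants $D_{N}(z^{-j}\operatorname{ev}_{\mathbf{A}^{d}}(f)^{-1})$ inside $R$, whereas the naive candidate $\pi_{0}$ itself does not preserve $R$. Everything else is the bookkeeping already isolated in Lemma 3.1 and the localisation lemma preceding it; note that, unlike Lemma 3.1, this final step uses nothing special about $\mathbf{A}^{d}$.
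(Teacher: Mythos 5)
Your proposal is correct and follows essentially the same route as the paper: reduce to showing the algebraic localisation is dense in $\mathcal{A}_{\mathbf{A}^{d}_{f}}$ via Lemma 3.1 and the localisation lemma, then extract the missing coefficients $y_{j}$ as limits of polynomials in $\nu=z\partial_{z}$ applied to elements of the localisation (your $D_{N}$ applied to $z^{-j}\operatorname{ev}_{\mathbf{A}^{d}}(f)^{-1}$ is the paper's $P^{j}_{n}$ applied to $y(z)$, up to normalisation), the point in both cases being that a derivation preserves the localisation while the coefficient projection does not. The only cosmetic difference is that the paper packages the coefficient-extraction step as a general density statement for the $\operatorname{Diff}_{D^{*}}$-span of the image of $\operatorname{ev}_{X}$, reduced to the case of $\mathbf{A}^{1}$, whereas you apply the operators directly to the topological generators $y_{j}$ of $\mathcal{A}_{\mathbf{A}^{d}_{f}}$.
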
 

\begin{proof} We know by lemmas 2.4. and 3.1. that the topology on $\mathcal{A}_{\mathbf{A}^{d}}[\operatorname{ev}_{\mathbf{A}^{d}}(f)^{-1}]$ agrees with the induced topology for the inclusion into $\mathcal{A}_{\mathbf{A}^{d}_{f}}$. As such we need only show that the image is dense in $\mathcal{A}_{\mathbf{A}^{d}_{f}}$. 

In order to prove this let us first note that the algebra, $\operatorname{Diff}_{D^{*}}$, of differential operators on $D^{*}$ acts continuously on $\mathcal{A}_{X}$, simply by acting on powers of $z$ in the evident manner. We claim now that the sub-algebra spanned by the $\operatorname{Diff}_{D^{*}}$ span of the image of $\operatorname{ev}_{X}$ is dense in $\mathcal{A}_{X}$ for any affine finite type $X$. Indeed it will follow immediately from the case of $\mathbf{A}^{1}$, which we now prove. It suffices to show that each topological $x_{i}\in\mathcal{A}_{\mathbf{A}^{1}}$ is in the desired topological closure.  

We write $\nu:=z\partial_{z}$ and define, for each $j$, a sequence of differential operators, $(P^{j}_{n})_{n}$ as follows. We set $P^{j}_{n}:=\lambda^{j}_{n}\prod_{i\in[-n,n]\setminus\{j\}}(\nu-i)$, where $\lambda^{j}_{n}$ are chosen so that $P^{j}_{n}(z^{j})=z^{j}$. We claim now that $P^{j}_{n}(x(z))\rightarrow x_{j}z^{j}$, as $n\rightarrow \infty$. Indeed let us note that the difference $P^{j}_{n}(x(z))-x_{j}z^{j}$ is of the form $\sum_{i}c_{in}x_{i}z^{i}$ for some scalars $c_{in}$, so that $c_{in}=0$ for all $i\in [-n,n]$. It follows that this is of the form $I_{\leq n}+z^{n}\mathcal{O}(L\mathbf{A}^{1})[[z]]$, where $I_{\leq n}$ is the ideal generated by the coordinates $x_{i}$ with $i\leq n$. This difference of course tends to $0$, whence we are done.

Now to conclude it suffices to show that the image of $\mathcal{A}_{\mathbf{A}^{d}}[\operatorname{ev}_{\mathbf{A}^{d}}(f)^{-1}]$ inside $\mathcal{A}_{\mathbf{A}^{d}_{f}}$ contains the image of $\operatorname{ev}_{\mathbf{A}^{d}_{f}}$ and is stable under $\operatorname{Diff}_{D^{*}}$. For this it suffices to check that it is stable under $\partial_{z}$, and it is clear how to extend the derivation to the localisation, so we are done. \end{proof}

\begin{remark} We expect this result to be true for smooth affine schemes more generally. It is lemma 3.1. which we have been unable to prove in this context. \end{remark}

\end{document}